\definecolor{NTNUblue}{RGB}{0,80,158}
\definecolor{NTNUbluesupport}{RGB}{62,98,138}
\definecolor{NTNUorange}{RGB}{239,129,20}
\newcommand{\R}{{\mathbb{R}}}
\newcommand{\Z}{{\mathbb{Z}}}
\newcommand{\Gr}[2]{\widetilde{\mathrm{Gr}_{#1}}(\mathbb{R}^{#2})}
\newcommand{\Spin}{\mathrm{Spin}}
\newcommand{\Sq}{\mathrm{Sq}}
\renewcommand{\O}{\mathbb{O}}
\newcommand{\ImO}{\mathrm{Im}\;\mathbb{O}}
\newcommand{\rz}{\mathbb{R}/\mathbb{Z}}
\newcommand{\trz}{\tau_{\rz}}
\newtheorem{theorem}{Theorem}[section]
\newtheorem{lemma}[theorem]{Lemma}
\newtheorem{prop}[theorem]{Proposition}
\newtheorem{cor}[theorem]{Corollary}
\newtheorem*{theorem*}{Theorem}
\theoremstyle{definition}
\newtheorem{defn}[theorem]{Definition}
\newtheorem{remark}[theorem]{Remark}
\begin{document}

\author{Eiolf Kaspersen}
\address{Department of Mathematical Sciences, NTNU, NO-7491 Trondheim, Norway}
\email{eiolf.kaspersen@ntnu.no}

\author{Gereon Quick}
\thanks{The second-named author was partially supported by the RCN Project No.\,313472.}
\address{Department of Mathematical Sciences, NTNU, NO-7491 Trondheim, Norway}
\email{gereon.quick@ntnu.no}

\title[Geometric cohomology classes for SPIN$(7)$ and SPIN$(8)$]{Geometric representation of cohomology classes for the Lie groups SPIN$(7)$ and SPIN$(8)$}

\date{}

\begin{abstract}
By constructing concrete complex-oriented maps we show that the eight-fold of the generator of the third integral cohomology of the spin groups $\Spin(7)$ and $\Spin(8)$ is in the image of the Thom morphism from complex cobordism to singular cohomology, while the generator itself is not in the image. 
We thereby give a geometric construction for a nontrivial class in the kernel of the differential Thom morphism of Hopkins and Singer for the Lie groups $\Spin(7)$ and $\Spin(8)$. 
The construction exploits the special symmetries of the octonions. 
\end{abstract}
\subjclass{\rm 57R77, 57T10, 55N22, 55S05, 55S10} 
\keywords{Thom morphism, complex cobordism, spin groups, differential cohomology, octonions}

\maketitle

\section{Introduction}

Let $\Spin(n)$ be the double cover of the real special orthogonal group $SO(n)$. 
The group $\Spin(n)$ is a compact smooth manifold of dimension $\frac{1}{2}n(n-1)$ and plays an important role in many areas of mathematics and mathematical physics. 
It is a natural question which elements of its cohomology can be represented by smooth manifolds. 
More precisely, based on Quillen's work in \cite{quillen}, we may ask which of the generators of $H^*(\Spin(n);\Z)$ are in the image of the Thom morphism 
\[
\tau \colon MU^*(\Spin(n)) \longrightarrow H^*(\Spin(n);\Z),
\]
where $MU$ denotes complex cobordism. 
While it is well-known that the Thom morphism is, in general, not surjective in degrees $\ge 3$  
(see for example \cite[Theorem 2.2]{totaro}), 
$\tau$ is indeed surjective for the spin groups $\Spin(n)$ for $n\le 6$ by \cite[Proposition 3.4]{KQ}. 
For $n\ge 7$, however, the Steenrod square $\Sq^3$ does not vanish on the mod $2$-reduction of the non-torsion generator $\gamma_3 \in H^3(\Spin(n);\Z)$.  
By \cite{cartanSem}, this implies that $\gamma_3$ cannot be represented by a smooth manifold (see \cite{KQ} for a further discussion and see for example \cite{nishimoto} and \cite{yagita2, yagita3} for related computations for Lie groups). 
In this note, we show that $8\cdot \gamma_3$ is, in fact, in the image of $\tau$, and 
we construct a concrete geometric representative. 
More precisely, we prove the following result (see Theorem \ref{thm:spin} below): 

\begin{theorem}\label{thm:main_theorem_intro} 
Let $\Gr{k}{n}$ denote the Grassmannian of oriented $k$-planes in $\R^n$ na dlet $S^1$ denote the unit circle.  
Let $\gamma_3$ be a non-torsion generator of $H^3(\Spin(7);\Z)$. 
There is a complex-oriented map 
\begin{align*}
\varphi = f_7 \times f_5 \colon \Gr{2}{7} \times S^1 \times \Gr{2}{5} \times S^1 \longrightarrow \Spin(7) 
\end{align*}
such that the Thom morphism $MU^3(\Spin(7)) \rightarrow H^3(\Spin(7);\Z)$ maps the element $[\varphi]$ represented by $\varphi$ to $8 \cdot \gamma_3 $.  
\end{theorem}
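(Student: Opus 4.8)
The plan is threefold: (i) write down $f_7$ and $f_5$ explicitly from the octonionic model of $\Spin(7)$; (ii) promote the resulting $\varphi$ to a complex-oriented map; and (iii) reduce the identity $\tau([\varphi]) = 8\gamma_3$, via Poincar\'e duality, to the evaluation of two characteristic numbers on complex quadrics, with the ``$8$'' coming out of octonion multiplication.

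For (i) I would identify $\R^7$ with the imaginary octonions $\ImO$ and recall that $\Spin(7)$ is the subgroup of $SO(\O) = SO(8)$ generated by the left multiplications $L_u$, $u \in S^6 \subset \ImO$, each of which is an orthogonal complex structure since $u^2 = -1$ in the alternative algebra $\O$. For an oriented $2$-plane $P \subset \ImO$ with oriented orthonormal frame $(u,v)$, the operator $J_P := L_uL_v \in \Spin(7)$ depends only on $P$ — rotating the frame leaves $L_uL_v$ fixed, using $u \perp v$ and $u^2 = v^2 = -1$ — and satisfies $J_P^2 = -1$, so $\theta \mapsto \exp(\theta J_P) = \cos\theta + \sin\theta\, J_P$ is a circle subgroup of $\Spin(7)$ of period $2\pi$. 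Put $f_7([P],\theta) := \exp(\theta J_P)$; restricting $P$ to a fixed $\R^5 \subset \ImO$ makes the same recipe land in a subgroup $\Spin(5) \subset \Spin(7)$ and yields $f_5$. Using the standard identification of $\Gr{2}{m}$ with the smooth complex quadric $Q_{m-2} \subset \mathbb{CP}^{m-1}$, these are smooth maps $\Gr{2}{7}\times S^1 \to \Spin(7)$ and $\Gr{2}{5}\times S^1 \to \Spin(7)$, and I set $\varphi = \mu \circ (f_7 \times f_5)$ with $\mu$ the multiplication of $\Spin(7)$.

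For (ii): the source $Z$ is stably almost complex — the two quadrics are complex manifolds and the two circles combine into a complex torus — and $T\Spin(7)$ is trivial because $\Spin(7)$ is a Lie group. Hence the stable normal bundle of $\varphi$, namely $\varphi^*T\Spin(7) - TZ$, has $KO(Z)$-class equal to a trivial bundle of even rank minus the realification of $TZ$, so it lies in the image of $KU(Z) \to KO(Z)$. This furnishes the complex orientation in the sense used here (compare \cite{KQ}), so $[\varphi] \in MU^3(\Spin(7))$ is defined and $\tau([\varphi])$ equals the Gysin image $\varphi_!(1) \in H^3(\Spin(7);\Z)$.

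For (iii): since $H^3(\Spin(7);\Z) \cong \Z$ is torsion-free, it suffices to find the integer $k$ with $\varphi_!(1) = k\gamma_3$, and this may be read off rationally. Rationally $H^*(\Spin(7);\Q) = \Lambda(x_3,x_7,x_{11})$ with $x_7, x_{11}$ primitive, so $\mu^*(x_7 \cup x_{11}) = x_7x_{11}\otimes 1 + x_7\otimes x_{11} - x_{11}\otimes x_7 + 1\otimes x_7x_{11}$, and after $(f_7\times f_5)^*$ only the term $-x_{11}\otimes x_7$ survives, because $\dim(\Gr{2}{7}\times S^1) = 11$ and $\dim(\Gr{2}{5}\times S^1) = 7$. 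The projection formula then gives
\[
k\cdot \ell \;=\; \pm\, d_7\, d_5,
\]
where $d_7 = \int_{\Gr{2}{7}\times S^1} f_7^*(x_{11})$, $d_5 = \int_{\Gr{2}{5}\times S^1} f_5^*(x_7)$, and $\ell = \langle \gamma_3 \cup x_7 \cup x_{11},[\Spin(7)]\rangle$ is the top intersection number of the three generators; so the theorem comes down to the assertion $d_7 d_5 = \pm 8\ell$.

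The hard part is precisely this exact evaluation. I would compute $d_m$ by writing the transgressive generator $x_{2m-3}$ of $H^*(\Spin(m);\Q)$ through the standard rotation subgroup and pulling it back along the octonionic formula for $f_m$: on $\Gr{2}{m}\times S^1 = Q_{m-2}\times S^1$ it becomes the $1$-dimensional class of $S^1$ times a power of the hyperplane class of the quadric, so $d_m$ is a concrete intersection number on $Q_{m-2}$ — already carrying a factor $2$ because the quadric has degree $2$, and another because the loop $\theta\mapsto\exp(\theta J_P)$ covers twice the standard generator of $\pi_1(SO(m))$. Octonion multiplication is what determines how the remaining quadric classes pair, forcing the product $d_7 d_5$ to equal $\pm 8\ell$ rather than a smaller power of $2$; keeping track of $\ell$ integrally, where the $2$-torsion of $H^*(\Spin(7);\Z)$ enters, is the other delicate point. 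As a consistency check, $\Sq^3$ is nonzero on the mod $2$ reduction of $\gamma_3$, so no odd multiple of $\gamma_3$ lies in the image of $\tau$; this is exactly why $8$, rather than $1$, is the natural target.
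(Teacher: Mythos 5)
Your construction of $f_7$ is essentially the paper's map in disguise: the operator $J_P=L_uL_v$ rotates each of the four mutually orthogonal planes $[u,v]$, $[e_0,uv]$, $[w,w(uv)]$, $[wu,wv]$ by $\pi/2$, so $\exp(\theta J_P)$ coincides with the product of four plane rotations used in Definition \ref{spinmap}; your frame-independence argument via $L_u^2=-1$ and $L_uL_v=-L_vL_u$ is correct and is a slicker version of Lemma \ref{lem:f7_well_defined}. However, you assert without proof the two facts that carry the octonionic content of the construction: that $\exp(\theta J_P)$ actually lies in $\Spin(7)$ (as the subgroup of $SO(8)$ defined by $g(x)\tilde g(y)=\tilde g(xy)$) and that it covers the plane rotation by $\pm 2\theta$ in $SO(7)$. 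The paper devotes Proposition \ref{prop:spin_is_image} to exactly this, via a case-by-case Moufang-identity computation on the basis $\mathcal B$; some such verification is unavoidable, and the angle-doubling in particular is an input to the degree count, not a footnote.

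The more serious gap is in step (iii). Your reduction via primitivity of $x_7,x_{11}$ and the projection formula to the identity $d_7d_5=\pm 8\ell$ is a legitimate alternative to the paper's route (the paper instead factors $c\circ(f_7\times f_5)$ through the squaring map $p$ on the circles and the map $h_{7,0}$ into $SO(7)$, then combines $(h_{7,0})_*=\pm4$ from \cite[Theorem 4.7]{KQ}, $p_*=4$, and $c_*=2$ from Pittie). But you stop precisely where the theorem's content lies: $d_7$, $d_5$ and $\ell$ are never evaluated. The heuristic you offer — a factor $2$ from the degree of the quadric and a factor $2$ from the loop double-covering the generator of $\pi_1(SO(m))$ — would give $d_7=d_5=\pm4$ and hence $d_7d_5=\pm16\ell$, which is not $\pm8\ell$ unless something else intervenes; and $\ell$, an integral top intersection number in a cohomology ring with substantial $2$-torsion, is exactly the kind of quantity one cannot wave at when the entire assertion is a precise power of $2$. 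As written, the proposal establishes that $\tau([\varphi])$ is \emph{some} multiple of $\gamma_3$ computable from three intersection numbers, but not that the multiple is $8$.
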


\begin{remark}
In \cite[Theorem 4.3]{KQ}, we constructed a geometric representative of two times the generator of $H^3(SO(5);\Z)$. 
This construction then generalizes to provide geometric representatives of higher multiples of the generators of $H^{4k+3}(SO(n);\Z)$ for $k \ge 0$ and $n \ge k+1$ (see \cite[Theorem 4.7]{KQ} for the precise formula). 
For spin groups, however, we do not know how to extend the construction of a corresponding map for $\Spin(n)$ with $n\ge 9$, 
despite the fact that such maps exist for all $n\ge 7$ by \cite[Proposition 3.4]{KQ}. 
The key features that make the construction of the map in Theorem \ref{thm:main_theorem_intro} possible for $\Spin(7)$, and then also for $\Spin(8)$ in Corollary \ref{cor:spin8}, 
are the special symmetries of the octonions which the groups $\Spin(7)$ and $\Spin(8)$ are related to. 
\end{remark}

There are at least two reasons why Theorem \ref{thm:main_theorem_intro} is important.  
On the one hand, we obtain a more complete picture of the geometric representation of the generators of $H^*(\Spin(7);\Z)$ and a geometric interpretation of the failure of the Thom morphism to be surjective in degree $3$.  
On the other hand, Theorem \ref{thm:main_theorem_intro} yields a geometric construction of a nontrivial element of degree $4$ in the kernel of the differential refinement of the Thom morphism of Hopkins--Singer as we will explain next (for a more general and more detailed discussion we refer to \cite[Section 2.3]{KQ}). 
For the smooth manifold $\Spin(7)$, let $\check{MU}(4)^4(\Spin(7))$ and $\check{H}(4)^4(\Spin(7))$ denote the differential refinements of complex cobordism and singular cohomology of Hopkins--Singer in \cite{hs}, respectively. 
By \cite[diagram (4.57)]{hs}, the Thom morphism $\tau \colon MU \to H\Z$ induces a commutative diagram 
\begin{align}\label{eq:map_of_ses_check_MUtoHZ}
\xymatrix{
MU^{3}(\Spin(7))\otimes_{\Z} \rz \ar[d]_-{\trz} \ar[r] & \check{MU}(4)^4(\Spin(7)) \ar[d]^-{\check{\tau}} \\
H^{3}(\Spin(7);\Z)\otimes_{\Z} \rz \ar[r] & \check{H}(4)^4(\Spin(7)) 
}    
\end{align}
in which the horizontal maps are injective. 
This implies that every element in the kernel of $\trz$ induces an element in the kernel of $\check{\tau}$. 
We say that an element in the kernel of $\trz$ or $\check{\tau}$ is \emph{nontrivial} if it is not contained in the respective ideal generated by $MU^{*<0}$. 
We claim that the element $[\varphi] \otimes 1/8 \in MU^3(\Spin(7)) \otimes\rz$ is a nontrivial element in the kernel of $\trz$:  
By Theorem \ref{thm:main_theorem_intro}, we have  
\[
\trz([\varphi] \otimes 1/8)
= \tau([\varphi]) \otimes 1 
= 8 \cdot \gamma_3 = 0.
\]
Hence $[\varphi] \otimes 1/8$ lies in the kernel of $\trz$. 
However, $[\varphi] \otimes 1/8$ is not $0$ in $MU^3(\Spin(7)) \otimes \rz$, since if $[\varphi]$ had been of the form $8 \cdot [\varphi']$ with $[\varphi'] \in MU^3(\Spin(7))$, 
then $[\varphi']$ would map to $\gamma_3$ or $\gamma_3+y$ for some class $y \in H^3(\Spin(7);\Z)$ with $8y=0$. 
The latter is not the case by \cite[Proposition 3.4]{KQ}. 
Moreover, $[\varphi] \otimes 1/8$ does not lie in the ideal generated by $MU^{*<0}$, since otherwise $[\varphi]$ would be in the kernel of $\tau$. 
Since $[\varphi]$ maps to $8 \cdot \gamma_3 \ne 0$ in $H^3(\Spin(7);\Z)$, the latter is not the case. 
Hence Theorem \ref{thm:main_theorem_intro} provides a geometric description of a nontrivial element in the kernel of the differential Thom morphism for $\Spin(7)$: 

\begin{theorem}\label{thm:diff_tau_spin_intro}
The class $\frac{1}{8}[\varphi]$ 
is a nontrivial element in the kernel of 
\begin{align*}
\check{\tau}  \colon \check{MU}(4)^4(\Spin(7)) \longrightarrow \check{H}(4)^4(\Spin(7))  
\end{align*}
which is not contained in the ideal generated by $MU^{*<0}$.  
\end{theorem}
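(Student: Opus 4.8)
The plan is to reduce the assertion, by means of the commutative diagram \eqref{eq:map_of_ses_check_MUtoHZ}, to three statements about the class $x := [\varphi]\otimes 1/8$ in $MU^{3}(\Spin(7))\otimes_{\Z}\rz$, and then to combine Theorem \ref{thm:main_theorem_intro} with \cite[Proposition 3.4]{KQ}. Concretely, I would define $\frac{1}{8}[\varphi]\in\check{MU}(4)^{4}(\Spin(7))$ to be the image of $x$ under the injective top horizontal map of \eqref{eq:map_of_ses_check_MUtoHZ}. Since that square commutes and the bottom horizontal map is injective as well, it is enough to check: (i) $x\in\ker\trz$; (ii) $x\neq 0$; and (iii) $x$ does not lie in the ideal generated by $MU^{*<0}$ in $MU^{*}(\Spin(7))\otimes_{\Z}\rz$. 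For (iii) one must additionally record that the top inclusion of \eqref{eq:map_of_ses_check_MUtoHZ} is $\pi_{*}MU$-linear, so that an element of $MU^{3}(\Spin(7))\otimes_{\Z}\rz$ lying in the ideal generated by $MU^{*<0}$ inside $\check{MU}(4)^{*}(\Spin(7))$ already lies in the ideal generated there by $MU^{*<0}$.

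For (i): by Theorem \ref{thm:main_theorem_intro} we have $\tau([\varphi]) = 8\gamma_{3}$ in $H^{3}(\Spin(7);\Z)$, and $\trz$ is obtained from $\tau$ by applying $-\otimes_{\Z}\rz$; hence $\trz(x) = 8\gamma_{3}\otimes\frac{1}{8} = \gamma_{3}\otimes 1 = 0$ in $H^{3}(\Spin(7);\Z)\otimes_{\Z}\rz$, since $1 = 0$ in $\rz$.

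For (ii) and (iii) I would exploit the coefficient sequence $0\to\Z\to\R\to\rz\to 0$. Tensoring it with the finitely generated abelian group $MU^{3}(\Spin(7))$ identifies the condition $x = 0$ with $[\varphi]$ being divisible by $8$ modulo torsion, say $[\varphi] = 8[\varphi'] + t$ with $t$ torsion. Because $\Spin(7)$ is $2$-connected, $H^{3}(\Spin(7);\Z)\cong\Z$ is torsion free, so $\tau(t) = 0$ and therefore $\tau([\varphi']) = \gamma_{3}$, contradicting \cite[Proposition 3.4]{KQ}; this gives (ii). For (iii), note first that the Thom morphism $MU^{*}(\mathrm{pt})\to H^{*}(\mathrm{pt};\Z)$ vanishes in negative degrees, so $\tau$ annihilates the ideal $I$ generated by $MU^{*<0}$ in $MU^{*}(\Spin(7))$ and hence factors through $Q := MU^{*}(\Spin(7))/I$. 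If $x$ lay in $I\otimes_{\Z}\rz$, then the image of $[\varphi]$ in $Q^{3}$ would be divisible by $8$ modulo torsion, and applying the induced map $Q\to H^{*}(\Spin(7);\Z)$ together with the torsion-freeness of $H^{3}(\Spin(7);\Z)$ would again produce an element of $MU^{3}(\Spin(7))$ mapping to $\gamma_{3}$, contradicting \cite[Proposition 3.4]{KQ}.

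The step I expect to be the main obstacle is the compatibility flagged in the first paragraph: transferring statement (iii) from the subgroup $MU^{3}(\Spin(7))\otimes_{\Z}\rz$ to the differential ring $\check{MU}(4)^{*}(\Spin(7))$. Here one has to use that $\check{\tau}$ is multiplicative and that the inclusion of \eqref{eq:map_of_ses_check_MUtoHZ} respects the $\pi_{*}MU$-module structure, so that an element of that subgroup which is a sum of products of negative-degree complex-cobordism coefficients with arbitrary differential-cobordism classes can already be written as such a sum within the subgroup — a point which should reduce to formal properties of the Hopkins--Singer construction in \cite{hs}. Granting this, all the remaining content is a direct consequence of Theorem \ref{thm:main_theorem_intro} and \cite[Proposition 3.4]{KQ}.
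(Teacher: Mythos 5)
Your proposal follows essentially the same route as the paper: using the injectivity of the horizontal maps in diagram \eqref{eq:map_of_ses_check_MUtoHZ}, reduce to showing that $[\varphi]\otimes 1/8 \in MU^3(\Spin(7))\otimes_\Z\rz$ lies in $\ker\trz$, is nonzero, and avoids the ideal generated by $MU^{*<0}$, all of which follow from $\tau([\varphi])=8\gamma_3$ (Theorem \ref{thm:main_theorem_intro}) together with \cite[Proposition 3.4]{KQ}. Your handling of divisibility modulo torsion and of the ideal condition via the quotient $MU^{*}(\Spin(7))/I$ is if anything more careful than the paper's own brief argument, and the compatibility issue you flag (transferring the ideal condition into $\check{MU}(4)^{*}(\Spin(7))$) is exactly the point the paper also treats as formal.
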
 

Elements in the kernel of the differential refinement of the Thom morphism are particularly interesting as they show in which way differential cobordism is a finer invariant than ordinary differential cohomology. 
Finally, we note that in \cite[\S 2.7]{hs} the group $\check{H}(4)^4(M)$, for a certain spin manifold $M$, explains the behavior of an important partition function in mathematical physics. 
We refer to \cite[Example 48]{gradysatiAHSS} for other interesting phenomena in mathematical physics related to the study of the morphisms between generalised differential cohomology theories. 
We do not know of a potential similar application of Theorem \ref{thm:diff_tau_spin_intro} yet. 
We expect, however, that the techniques used to prove Theorem \ref{thm:diff_tau_spin_intro} at least will help to shed new light on such phenomena and on the Abel--Jacobi invariant for complex cobordism of \cite{hausquick}. \\

The paper is organised as follows. 
In section \ref{sec:octonions} we recall some basic facts about the geometry of the octonions and special orthogonal and spin groups. 
In section \ref{sec:map} we construct the maps that feature in Theorem \ref{thm:main_theorem_intro} by using the special symmetries of the octonions. 
The nice properties of oriented Grassmannians and compact Lie groups then imply that the map is complex-oriented. 
In section \ref{sec:homology} we prove our main technical results Theorem \ref{thm:spin} and Corollary \ref{cor:spin8} by computing the effect of the relevant maps in homology via local degrees.


\section{Octonions and \texorpdfstring{$\Spin(7)$}{Spin(7)}}\label{sec:octonions}

We let $\O$ denote the \textit{octonions}, i.e., the normed division algebra with $\R$-basis $\{e_0,e_1,e_2,e_3,e_4,e_5,e_6,e_7\}$ which satisfies the following relations: 
We have $e_0=1$ and $e_i^2 = -1$ for $i \neq 0$, and 
the multiplication of distinct basis elements is given by the following Fano plane as described in \cite{mn}. 

\begin{center}
\begin{tikzpicture}[scale=1.5]
    \node(1)at(0,1.732){};
    \node(2)at(-0.5,0.866){};
    \node(3)at(-1,0){};
    \node(4)at(0,0.577){};
    \node(5)at(0,0){};
    \node(6)at(1,0){};
    \node(7)at(0.5,0.866){};

    \draw[fill=black](1)circle(0.05);
    \draw[fill=black](2)circle(0.05);
    \draw[fill=black](3)circle(0.05);
    \draw[fill=black](4)circle(0.05);
    \draw[fill=black](5)circle(0.05);
    \draw[fill=black](6)circle(0.05);
    \draw[fill=black](7)circle(0.05);

    \draw (1) -- (3);
    \draw (6) -- (3);
    \draw (1) -- (6);
    \draw (1) -- (5);
    \draw (1) -- (3);
    \draw (7) -- (3);
    \draw (2) -- (6);
    \draw (4)circle(0.577);

    \node[xshift=-0.3cm, yshift=0.1cm] at (1) {$e_1$};
    \node[xshift=-0.3cm, yshift=0.1cm] at (2) {$e_2$};
    \node[xshift=-0.3cm, yshift=0.1cm] at (3) {$e_3$};
    \node[xshift=-0.2cm, yshift=0.3cm] at (4) {$e_4$};
    \node[xshift=-0.0cm, yshift=-0.3cm] at (5) {$e_5$};
    \node[xshift=0.3cm, yshift=0.1cm] at (6) {$e_6$};
    \node[xshift=0.3cm, yshift=0.1cm] at (7) {$e_7$};

    \draw[-Triangle] (3) -- (-0.5,0);
    \draw[-Triangle] (3) -- (-0.6,0.23);
    \draw[-Triangle] (1) -- (0,1.232);
    \draw[-Triangle] (1) -- (-0.25,1.299);
    \draw[-Triangle] (6) -- (0.6,0.23);
    \draw[-Triangle] (6) -- (0.75,0.433);
    \draw[-Triangle] (0.201,1.1196) -- (0.2, 1.12);
    
\end{tikzpicture}
\end{center}
For example, we have $e_3 e_2 = -e_1$, where the negative sign corresponds to the fact that the path from $e_3$ to $e_2$ goes against the direction of the arrow. 
We note that several versions of this Fano plane exist, producing different bases for $\O$. 
%
The subspace of $\O$ spanned by $\{e_1,\ldots,e_7\}$ is referred to as the \textit{purely imaginary octonions}, denoted $\ImO$.

By associating the octonions with $\R^8$ and the purely imaginary octonions with $\R^7$, the group $SO(8)$ acts on the octonions $\O$, while $SO(7)$ acts on $\ImO$. 
We then have the following construction of the group $\Spin(7)$, from \cite{imn} and \cite{yokota67}. 
For every $g\in SO(7)$ there exists a unique, up to sign, $\Tilde{g}\in SO(8)$ such that $g(x)\Tilde{g}(y) = \Tilde{g}(xy)$ for all $x,y \in \O$. 
Then we can define $\Spin(7)$ as a subgroup of $SO(8)$ by
\begin{equation*}
    \Spin(7) = \left\{ \Tilde{g}\in SO(8)\; \vert\; g\in SO(7) \right\}.
\end{equation*}
Thus, elements of $\Spin(7)$ can be considered as transformations of $8$-dimensional Euclidean space. The canonical double covering $\Spin(7) \rightarrow SO(7)$ is the map $\Tilde{g} \mapsto g$. For future reference we now state without proof some well-known properties of the octonions \cite[Chapters 6.5 and 6.7]{cs}.

\begin{lemma}\label{properties}
Let $x,y,z\in \O$. Then we have
\begin{itemize}
    \item[\rm{(i)}]  $x(xy)=(xx)y$, \\
                $(yx)x=y(xx)$,
    \item[\rm{(ii)}] $(x(yz))x = x((yz)x) = (xy)(zx)$, \\
                $(x(yx))z = x(y(xz))$,\\
                $y(x(zx)) = ((yx)z)x$.
\end{itemize}
Now let $x,y,z\in \ImO$. Then we have
\begin{itemize}
    \item[\rm{(iii)}] $xy=z \Leftrightarrow x=yz$,
    \item[\rm{(iv)}] $xy = -yx$ if $x \perp y$.
\end{itemize}    
\end{lemma}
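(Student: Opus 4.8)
The plan is to derive all four items from two classical structural facts about the octonions, both recorded in the reference \cite[Ch.~6]{cs} already cited in the statement: that $(\O,N)$ is a composition algebra --- the norm $N(x)=x\bar x$ is multiplicative, $N(xy)=N(x)N(y)$, conjugation $x\mapsto\bar x$ is an involutive anti-automorphism, and $x+\bar x\in\R\cdot 1$ --- and that $\O$ is an alternative algebra, equivalently that the associator $[a,b,c]:=(ab)c-a(bc)$ is an alternating trilinear map. Granting these, each of (i)--(iv) reduces to a short manipulation, so the first step is to set up this machinery, including the adjoint relations $\langle ux,v\rangle=\langle u,v\bar x\rangle$, $\langle xu,v\rangle=\langle u,\bar x v\rangle$, and $\langle ax,ay\rangle=N(a)\langle x,y\rangle$ that hold in any composition algebra.

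For (i) I would polarize $N(xy)=N(x)N(y)$ and use the adjoint relations to obtain the Kirmse identities $(yx)\bar x=N(x)\,y=\bar x(xy)$, then combine these with the characteristic equation $x^2=t(x)\,x-N(x)\,1$, where $t(x)1=x+\bar x$ (itself obtained by multiplying $x+\bar x=t(x)1$ by $x$ and using $x\bar x=N(x)1$), substituting $\bar x=t(x)1-x$ to get $x(xy)=x^2y=(xx)y$ and $(yx)x=yx^2=y(xx)$. Alternatively one simply invokes that $\O$ is alternative, of which (i) is the defining identity. Items (iv) and (iii) are then quick. For $x,y\in\ImO$ we have $\bar x=-x$ and $\bar y=-y$, so $xy+yx=xy+\bar y\bar x=xy+\overline{xy}=t(xy)\,1=2\langle xy,1\rangle\,1=2\langle y,\bar x\rangle\,1=-2\langle x,y\rangle\,1$; hence $xy=-yx$ precisely when $x\perp y$, which is (iv), and the same line shows $xy\in\ImO$ in that case. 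For (iii), if $z=xy$ with $x,y,z\in\ImO$ then $x\perp y$ by (iv), so $yx=-z$; since $y\in\ImO$ gives $y^2=-N(y)1$, the left alternative law from (i) yields $y(yx)=y^2x=-N(y)\,x$, whence $x=N(y)^{-1}yz$. On unit imaginary octonions --- in particular on the basis elements $e_i$, which is where the lemma is applied --- the factor $N(y)$ equals $1$ and this is exactly $x=yz$; the reverse implication is symmetric.

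The genuine work is in (ii), but it is classical and uses only that the associator is alternating. Flexibility $(x(yz))x=x((yz)x)$ is the identity $[a,b,a]=0$ (with $a=x$, $b=yz$), immediate from the antisymmetry of $[\,\cdot\,,\,\cdot\,,\,\cdot\,]$ in its outer arguments. The three Moufang laws $(xy)(zx)=x((yz)x)=(x(yz))x$, $(x(yx))z=x(y(xz))$ and $y(x(zx))=((yx)z)x$ are obtained by the standard linearization: linearizing the left and right alternative identities $[a,a,b]=0$ and $[a,b,b]=0$ gives antisymmetry of the associator in the first two and in the last two slots, hence full alternation, and feeding this back through repeated use of alternativity produces the three identities; see \cite[Ch.~6.7]{cs} or any standard text on nonassociative algebras for the details.

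I expect the only real obstacle to be the bookkeeping in (ii) --- keeping track of which Moufang identity is in play and of the precise form of the polarized norm and adjoint identities used in (i) --- rather than any conceptual difficulty. In a final write-up one could reasonably either reproduce the short linearization argument for (ii) or, given that the paper only needs these facts as input, simply cite \cite[Ch.~6.5 and 6.7]{cs} as the authors already do.
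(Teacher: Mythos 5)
Your outline is correct, but note that the paper does not prove this lemma at all: it is explicitly ``stated without proof'' with a citation to Conway--Smith, Chapters 6.5 and 6.7, so there is no argument in the paper to compare against. What you have written is a faithful sketch of how those facts are established in the standard theory: (i) is the alternative law (derivable from the Kirmse identities obtained by polarizing $N(xy)=N(x)N(y)$), (ii) consists of flexibility plus the three Moufang identities, which follow from the alternation of the associator, and (iv) is the polarization $xy+yx=-2\langle x,y\rangle\,1$ for imaginary $x,y$. Your treatment of (iii) is a genuinely useful observation rather than a defect: as literally stated, $xy=z\Rightarrow x=yz$ fails for non-unit imaginary octonions (your computation gives $yz=N(y)\,x$), so the implication holds only up to the factor $N(y)$; the lemma is applied in the paper exclusively to orthonormal vectors and unit vectors $w$, where $N(y)=1$, so the paper's use of (iii) is sound, but the hypothesis ``$x,y,z\in\ImO$'' should really be ``unit imaginary octonions'' (or the conclusion should read $N(y)\,x=yz$). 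The only place where your write-up is a plan rather than a proof is the derivation of the Moufang identities from alternativity in (ii); since the paper itself treats the whole lemma as citable background, deferring that linearization argument to the reference is reasonable.
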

We then state and prove one more property of the octonions.
\begin{lemma}\label{associativity}
    Let $x,y,z\in \ImO$ such that $x,y,z,xy$ are mutually orthogonal. Then $x(yz) = -(xy)z$.
\end{lemma}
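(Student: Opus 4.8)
The plan is to express both sides through the associator $[a,b,c]:=(ab)c-a(bc)$ and to exploit that, by Lemma~\ref{properties}(i), $\O$ is an alternative algebra, so that $[a,b,c]$ is an \emph{alternating} trilinear function of its arguments; in particular it is invariant under cyclic permutations. Linearizing the left alternative identity $x(xy)=(xx)y$ yields $[a,b,c]+[b,a,c]=0$ for all $a,b,c$, and linearizing $(yx)x=y(xx)$ yields $[a,b,c]+[a,c,b]=0$; since the transpositions $(12)$ and $(23)$ generate $S_3$, the associator changes sign under transpositions, hence is cyclic.

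Before using this I would record two easy facts. First, the product of orthogonal purely imaginary octonions is again purely imaginary, since $\overline{ab}=\overline{b}\,\overline{a}=ba=-ab$ by Lemma~\ref{properties}(iv); thus $xy$, $yz$, $xz$ all lie in $\ImO$ and Lemma~\ref{properties}(iv) applies to them freely. Second, the fourth orthogonality hypothesis $z\perp xy$ is equivalent to $x\perp yz$: using the standard ``associativity of the norm form'' $\langle yz,x\rangle=\langle z,\overline{y}x\rangle$ for $\O$ (see \cite{cs}) together with $\overline{y}=-y$ and $yx=-xy$ one obtains $\langle yz,x\rangle=\langle z,xy\rangle$. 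Consequently, since $x$ and $yz$ are orthogonal and both imaginary, Lemma~\ref{properties}(iv) gives $x(yz)=-(yz)x$.

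I would then combine these. Evaluating $[x,y,z]+[y,x,z]=0$ at our elements and using $yx=-xy$ collapses it to $x(yz)=-y(xz)$. Next, cyclic invariance gives $[x,y,z]=[y,z,x]$, i.e.\ $(xy)z-x(yz)=(yz)x-y(zx)$. On the right-hand side, $(yz)x=-x(yz)$ by the previous paragraph, while $y(zx)=-y(xz)=x(yz)$ because $zx=-xz$ (Lemma~\ref{properties}(iv)) and $x(yz)=-y(xz)$. Hence the right-hand side equals $-2\,x(yz)$, so $(xy)z-x(yz)=-2\,x(yz)$, which rearranges to $x(yz)=-(xy)z$, as desired.

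The only step that is not purely formal algebra is the implication $z\perp xy\Rightarrow x\perp yz$, for which I invoke the norm-form identity; this, together with keeping track of which products are imaginary so that Lemma~\ref{properties}(iv) may be applied, is the point requiring care. An alternative, even more hands-on route avoids it: the hypothesis says exactly that $\{1,x,y,xy\}$ spans a quaternion subalgebra $\HH\subseteq\O$ with $z\perp\HH$, so $\O=\HH\oplus\HH z$ realizes $\O$ as a Cayley--Dickson double, and a direct computation in these coordinates gives $x(yz)=(yx)z$, whence $yx=-xy$ finishes the proof. I do not expect any serious obstacle beyond this bookkeeping.
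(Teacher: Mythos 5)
Your argument is correct, but it takes a genuinely different route from the paper. The paper proves the identity by a single direct chain of manipulations using the Moufang-type identities of Lemma~\ref{properties}(ii) together with the alternative laws (i) and anticommutativity (iv): starting from $x(yz)=-x(zy)=(zy)x$, it inserts $-(xx)$, and then repeatedly reassociates via (ii) until $-(xy)z$ appears. Your proof instead works structurally with the associator: you linearize (i) to show the associator is alternating (hence cyclically invariant), derive $x(yz)=-y(xz)$ from the vanishing of the symmetrized associator, and then combine this with cyclic invariance and the anticommutation relations to solve for $x(yz)$. The one genuinely non-formal ingredient in your route is the equivalence $z\perp xy\Leftrightarrow x\perp yz$ via the norm-form identity $\langle yz,x\rangle=\langle z,\bar{y}x\rangle$; it is worth noting that the paper's proof silently uses the same fact (its second step $-x(zy)=(zy)x$ applies Lemma~\ref{properties}(iv) to the pair $x$, $zy$, which presupposes $x\perp zy$), so your treatment is if anything more careful on this point, at the cost of importing one more standard fact about composition algebras from \cite{cs}. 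Your alternative sketch via the Cayley--Dickson decomposition $\O=\HH\oplus\HH z$ is also valid and is perhaps the most conceptual explanation of the sign, though the conclusion then depends on fixing a doubling convention. All three arguments are sound; the paper's is the most self-contained relative to its stated Lemma~\ref{properties}, while yours makes the underlying alternativity structure explicit.
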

\begin{proof}
Using Lemma \ref{properties}, we compute
    \begin{align*}
        x(yz) &= -x(zy) = (zy)x = -(xx)((zy)x) = -x(x((zy)x)) \\
        &= -x((xz)(yx)) = x((yx)(xz)) = (x((yx)x))z = -(xy)z.
    \end{align*}\\[-28pt]
\end{proof}

We will use the following notation. 

\begin{defn}
We call a pair $(L,\sigma)$ consisting of a $2$-dimensional sub-vector space $L \subseteq \R^n$ with an orientation $\sigma$ of $L$ an \emph{oriented plane}. 
When $x$ and $y$ are orthonormal vectors in $\R^n$, we write $[x,y]$ for the oriented plane spanned by $x$ and $y$ where the orientation is given by the ordering of the two vectors, i.e., 
$[x,y]$ and $[y,x]$ have opposite orientations. 
\end{defn}

\begin{defn}
Let $(L,\sigma)\subseteq \R^n$ be an oriented plane, and let $t\in \R$. We write $r_{L,\sigma,t}$ for the rotation of the plane $L$ by the angle $t$ along the orientation $\sigma$.
\end{defn}

We note that if $(L,\sigma)=[x,y]$, then $r_{L,\sigma,t}$ is given by
\begin{align*}
    x &\longmapsto \cos{t}x + \sin{t}y \\
    y &\longmapsto -\sin{t}x + \cos{t}y. 
\end{align*}
We now extend $r_{l,\sigma,t}$ to a transformation of $\R^8$.

\begin{defn}
Let $(L,\sigma)\subset \R^8$ be an oriented plane, $t\in \R$ and $z \in \R^8$. 
For $z \in \R^8$, let $z_1\in L$ and $z_2 \in L^{\perp}$ be the unique vectors such that $z=z_1 + z_2$. 
Then we define $\psi_{(L,\sigma),t}\in SO(8)$ by
    \begin{equation*}
        \psi_{(L,\sigma),t}(z) = r_{L,\sigma,t}(z_1) + z_2.
    \end{equation*}
    We will refer to an element of the form $\psi_{(L,\sigma),t}$ as a \textit{rotation}.
\end{defn}


\section{Constructing the map}\label{sec:map}

Let $\Gr{k}{n}$ denote the Grassmannian of oriented $k$-planes in $\R^n$. 
We think of the circle $S^1$ as embedded in the complex plane, and we will write its points on the form $e^{it}$ parameterized by $t\in \R$. 
We now construct a map which we subsequently prove is well-defined.

\begin{defn}\label{spinmap}
For $[x,y]\in\Gr{2}{7}$, let $w$ denote a unit vector in $\R^8$ such that $w \perp \mathrm{Span}\{e_0,x,y,xy\}$. 
We then define
    \begin{align*}
    f_7\colon \Gr{2}{7} \times S^1 &\longrightarrow SO(8) \\
    ([x,y],e^{it}) &\longmapsto \psi_{[x,y],t}\cdot\psi_{[e_0,xy],t}\cdot\psi_{[w,w(xy)],t}\cdot\psi_{[wx,wy],t}.
\end{align*}
\end{defn}
\begin{lemma}\label{lem:f7_well_defined}
The map $f_7$ is well-defined.
\end{lemma}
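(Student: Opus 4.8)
The plan is to show that the formula defining $f_7([x,y],e^{it})$ does not depend on the choices that were made, namely: (1) the choice of orthonormal pair $(x,y)$ representing the oriented plane $[x,y]\in\Gr{2}{7}$, and (2) the choice of unit vector $w\in\R^8$ with $w\perp\Span{e_0,x,y,xy}$. I would also verify along the way that the four planes $[x,y]$, $[e_0,xy]$, $[w,w(xy)]$, $[wx,wy]$ appearing as subscripts are genuinely two-dimensional (i.e. the spanning vectors are orthonormal), so that the rotations $\psi_{(L,\sigma),t}$ are defined; this is exactly where Lemma~\ref{properties} and Lemma~\ref{associativity} come in, since one needs e.g. $wx\perp wy$, $w\perp w(xy)$, $xy\perp e_0$, and that all of $x,y,xy,w$ and their relevant products are mutually orthogonal unit vectors. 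Here $\ImO$ is spanned by $e_1,\dots,e_7$ and $e_0=1$, and for orthonormal $x,y\in\ImO$ the product $xy$ is again a unit purely imaginary octonion orthogonal to both $x$ and $y$ by Lemma~\ref{properties}(iii)--(iv); multiplication by the unit octonion $w$ is an orthogonal transformation of $\R^8$, so it preserves the relevant orthogonality and norm relations.

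First I would settle the dependence on $w$. The space $\Span{e_0,x,y,xy}$ is $4$-dimensional (once the orthonormality claims above are in place), so its orthogonal complement in $\R^8$ is also $4$-dimensional, and the admissible $w$ form a $3$-sphere's worth of choices. The key point is that $\psi_{[w,w(xy)],t}\cdot\psi_{[wx,wy],t}$ should be independent of which such $w$ we pick — intuitively, the four vectors $w, wx, wy, w(xy)$ form an orthonormal basis of the $4$-plane $\Span{e_0,x,y,xy}^\perp$ (using that $\ImO$-multiplication by $x$, $y$, $xy$ permutes these spaces appropriately), and the composite of the two rotations is the transformation of that $4$-plane that acts on the whole $4$-plane in a canonical way determined only by the plane $[x,y]$, not by the choice of orthonormal frame $w,wx,wy,w(xy)$ inside it. I would make this precise by showing that $\psi_{[w,w(xy)],t}\cdot\psi_{[wx,wy],t}$ acts as a fixed element of $SO(4)$ on $\Span{e_0,x,y,xy}^\perp$ (and as the identity on its complement), computing its matrix in the basis $w,wx,wy,w(xy)$ and checking it is the same for $w'$ in the frame; this is where Lemma~\ref{associativity} is essential, to control products like $w(xy)$, $(wx)(xy)$, etc., and to verify the cross-terms vanish.

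Next I would handle the dependence on the representative $(x,y)$ of $[x,y]$. The oriented plane $[x,y]$ is preserved exactly under replacing $(x,y)$ by $(\cos\theta\, x+\sin\theta\, y,\ -\sin\theta\, x+\cos\theta\, y)$ for $\theta\in\R$, i.e. by the rotated frame $r_{[x,y],\theta}$ applied to $(x,y)$. I would check that each of the four factors transforms compatibly: $\psi_{[x,y],t}$ is manifestly unchanged since it depends only on the oriented plane; $xy$ is unchanged because $(\cos\theta\, x+\sin\theta\, y)(-\sin\theta\, x+\cos\theta\, y)=xy$ using $x^2=y^2=-1$ and $xy=-yx$ (Lemma~\ref{properties}(iv)), so $\psi_{[e_0,xy],t}$ and $\psi_{[w,w(xy)],t}$ are unchanged; and for the last factor one computes that $[(\cos\theta\, x+\sin\theta\, y)w_0,\ (-\sin\theta\, x+\cos\theta\, y)w_0]$ equals $[x w_0, y w_0]$ as an \emph{oriented} plane — wait, more carefully, $[wx,wy]$ is rotated within itself by the angle $\theta$, so $\psi_{[wx,wy],t}$ is literally unchanged because it too depends only on the oriented plane $[wx,wy]$, and that oriented plane only depends on $[x,y]$. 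So in fact all four subscript planes depend only on the oriented plane $[x,y]$ (plus the choice of $w$, already dealt with), which gives independence of the frame almost immediately once the orthonormality bookkeeping is done.

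The main obstacle I expect is the $w$-independence step: unlike the frame-rotation invariance, which reduces to the elementary identity $xy=-yx$ on orthonormal imaginary octonions, the independence of $\psi_{[w,w(xy)],t}\cdot\psi_{[wx,wy],t}$ from $w$ genuinely uses the nonassociativity-controlled identities of Lemma~\ref{associativity} (the orthonormal quadruple $x,y,z,xy$ hypothesis there matches exactly the situation $z=w$ restricted to $\ImO$, but here $w\in\R^8$ may have an $e_0$-component, so some care is needed splitting $w$ into its real and imaginary parts, or arguing that the admissible $w$ can be taken purely imaginary without loss of generality since $\Span{e_0,x,y,xy}^\perp\subseteq\ImO$ whenever $e_0$ is among the spanning vectors — which is the case here). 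Once it is clear that $w, wx, wy, w(xy)$ is an orthonormal basis of a fixed $4$-plane and the composite rotation has a $w$-independent matrix in an appropriate sense, the lemma follows; I would organize the computation by first proving the orthonormality claims as a preliminary, then the $w$-independence, then the frame-independence.
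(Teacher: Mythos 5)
Your proposal is correct and follows essentially the same route as the paper's proof: invariance under the frame $(x,y)\mapsto(\cos s\,x+\sin s\,y,\,-\sin s\,x+\cos s\,y)$ reduces to the identity $x'y'=xy$ (so all four subscript planes depend only on the oriented plane), and invariance under the choice of $w$ is proved by showing $\{e_0,x,y,xy,w,wx,wy,w(xy)\}$ is an orthogonal basis obeying octonion-type multiplication rules, expanding an alternative $w'=aw+bwx+cwy+dw(xy)$ in that basis, and checking that $\psi_{[w,w(xy)],t}\cdot\psi_{[wx,wy],t}$ and its $w'$-analogue agree on all basis vectors. The paper performs the two checks in the opposite order, but this is immaterial.
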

\begin{proof}
Firstly, we show that the map is invariant under the choice of basis $[x,y]$. A different basis for the same oriented plane is given by
\begin{equation*}
        [x',y'] = [\cos{s}x + \sin{s}y,-\sin{s}x + \cos{s}y], \quad s\in \R.
\end{equation*}
We then see that
\begin{align*}
        x'y' &= (\cos{s}x + \sin{s}y)(-\sin{s}x + \cos{s}y) \\&= \cos^2(s)xy - \sin^2(s)yx - \cos{s}\sin{s}x^2 + \cos{s}\sin{s}y^2 = xy.
\end{align*}
Thus, we have
\begin{align*}
    [x',y'] & = [x,y], ~
    [e_0,x'y'] = [e_0,xy], ~  
    [w,w(x'y')] = [w,w(xy)], \\
    [wx',wy'] &= [\cos{s}wx + \sin{s}wy, -\sin{s}wx + \cos{s}wy] = [wx,wy], 
\end{align*}
which implies that
\begin{equation*}
\psi_{[x,y],t}\cdot\psi_{[e_0,xy],t}\cdot\psi_{[w,w(xy)],t}\cdot\psi_{[wx,wy],t} = \psi_{[x',y'],t}\cdot\psi_{[e_0,x'y'],t}\cdot\psi_{[w,w(x'y')],t}\cdot\psi_{[wx',wy'],t}.
\end{equation*}
    
Next, we show that $f_7$ is invariant under the choice of $w$. Since the rotations $\psi_{[x,y],t}$ and $\psi_{[e_0,xy],t}$ do not depend on the choice of $w$, it suffices to show that the map
\begin{equation*}
        \widetilde{f_7}\colon([x,y],e^{it})\mapsto \psi_{[w,w(xy)],t}\cdot \psi_{[wx,wy],t}
\end{equation*}
is well-defined. Given some $w$, we claim that
\begin{equation*}
    \mathcal{B} = \{e_0,x,y,xy,w,wx,wy,w(xy)\}
\end{equation*}
is an orthogonal basis for $\O$. The vectors $e_0,x,y,xy,w$ are mutually orthogonal by construction. It follows from Lemmas \ref{properties} and \ref{associativity} that the remaining ones are also mutually orthogonal. For example, we have
\begin{equation*}
    -wx \perp -e_0 \implies -(wx)y \perp -y \implies w(xy) \perp y.
\end{equation*}
Thus, $\mathcal{B}$ is an orthogonal basis. Furthermore, Lemmas \ref{properties} and \ref{associativity} imply that the multiplication of basis elements is given by
\begin{equation}\label{Bfano}
\begin{tikzpicture}[scale=1.5]
    \node(1)at(0,1.732){};
    \node(2)at(-0.5,0.866){};
    \node(3)at(-1,0){};
    \node(4)at(0,0.577){};
    \node(5)at(0,0){};
    \node(6)at(1,0){};
    \node(7)at(0.5,0.866){};

    \draw[fill=black](1)circle(0.05);
    \draw[fill=black](2)circle(0.05);
    \draw[fill=black](3)circle(0.05);
    \draw[fill=black](4)circle(0.05);
    \draw[fill=black](5)circle(0.05);
    \draw[fill=black](6)circle(0.05);
    \draw[fill=black](7)circle(0.05);

    \draw (1) -- (3);
    \draw (6) -- (3);
    \draw (1) -- (6);
    \draw (1) -- (5);
    \draw (1) -- (3);
    \draw (7) -- (3);
    \draw (2) -- (6);
    \draw (4)circle(0.577);

    \node[xshift=-0.55cm, yshift=0.1cm] at (1) {$w(xy)$};
    \node[xshift=-0.3cm, yshift=0.1cm] at (2) {$wy$};
    \node[xshift=-0.3cm, yshift=0.1cm] at (3) {$x$};
    \node[xshift=-0.12cm, yshift=0.25cm] at (4) {$y$};
    \node[xshift=-0.0cm, yshift=-0.3cm] at (5) {$wx$};
    \node[xshift=0.3cm, yshift=0.1cm] at (6) {$w$.};
    \node[xshift=0.3cm, yshift=0.1cm] at (7) {$xy$};

    \draw[-Triangle] (3) -- (-0.5,0);
    \draw[-Triangle] (3) -- (-0.6,0.23);
    \draw[-Triangle] (1) -- (0,1.232);
    \draw[-Triangle] (1) -- (-0.25,1.299);
    \draw[-Triangle] (6) -- (0.6,0.23);
    \draw[-Triangle] (6) -- (0.75,0.433);
    \draw[-Triangle] (0.201,1.1196) -- (0.2, 1.12);
    
\end{tikzpicture}
\end{equation}
Given this basis, a different choice of vector $w$ must be given by $w' = aw + bwx + cwy + dw(xy)$, where $a,b,c,d\in\R$. Using Diagram (\ref{Bfano}), we find that
\begin{align*}
    w'x &= -b w + a wx -d wy + c w(xy) \\
    w'y &= -cw + d wx + a wy - b w(xy) \\
    w'(xy) &= -d w -c wx + b wy +a w(xy).
\end{align*}
Let $\psi = \psi_{[w,w(xy)],t}\cdot \psi_{[wx,wy],t}$ and $\psi'=\psi_{[w',w'(xy)],t}\cdot \psi_{[w'x,w'y],t}$. It is then straight forward to check that $\psi$ and $\psi'$ act the same way on all elements of $\mathcal{B}$. For example, we have
\begin{align*}
    \psi(w') &= a\psi(w) + b\psi(wx) + c\psi(wy) + d\psi(w(xy)) \\
    & = a(\cos{t}w + \sin{t}w(xy)) + b(\cos{t}wx + \sin{t}wy) \\
    & + c(-\sin{t}wx + \cos{t}wy) + d(-\sin{t}w + \cos{t}w(xy)) \\
    & = \cos{t}\left[a w + b wx + c wy + d w(xy) \right] 
     + \sin{t}\left[-dw -c wx + b wy +a w(xy) \right] \\
    & = \cos{t}w' + \sin{t}w'(xy) = \psi'(w').
\end{align*}
Thus, the map $f_7$ is well-defined.
\end{proof}

We now show how the map $f_7$ interacts with the groups $\Spin(7)$ and $SO(7)$. Let $q$ denote the double covering $\Spin(7) \longrightarrow SO(7)$. 

\begin{prop}\label{prop:spin_is_image}
The image of the map $f_7$ is $\Spin(7) \subset SO(8)$. 
Moreover, for all $([x,y],e^{it})\in \Gr{2}{7}\times S^1$, we have $(q \circ f_7)([x,y],e^{it}) = \psi_{[x,y],2t} \in SO(7)$.
\end{prop}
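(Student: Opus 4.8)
The plan is to verify both claims by a direct computation, working with the octonionic basis $\mathcal{B} = \{e_0,x,y,xy,w,wx,wy,w(xy)\}$ that was shown in Lemma~\ref{lem:f7_well_defined} to be an orthogonal basis of $\O$ with multiplication encoded by Diagram~(\ref{Bfano}). The key structural observation is that the four rotation factors $\psi_{[x,y],t}$, $\psi_{[e_0,xy],t}$, $\psi_{[w,w(xy)],t}$, $\psi_{[wx,wy],t}$ act in mutually orthogonal $2$-planes whose direct sum is all of $\R^8$, so $f_7([x,y],e^{it})$ is the block-diagonal transformation which on each plane is rotation by $t$. From this explicit form I can read off how $f_7([x,y],e^{it})$ acts on every basis vector of $\mathcal{B}$.

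First I would show that the image lies in $\Spin(7)$. Recall that $\Spin(7)\subset SO(8)$ consists of those $\tilde g$ for which there exists $g\in SO(7)$ with $g(a)\tilde g(b) = \tilde g(ab)$ for all $a,b\in\O$. So I need to produce, for each $([x,y],e^{it})$, an element $g\in SO(7)$ satisfying this relation with $\tilde g = f_7([x,y],e^{it})$. The natural candidate, suggested by the second assertion, is $g = \psi_{[x,y],2t}\in SO(7)$ (acting on $\ImO = \R^7$, fixing $e_0$). To check $g(a)\tilde g(b) = \tilde g(ab)$ it suffices, by bilinearity, to verify it when $a$ and $b$ each run over a basis; I would take $a\in\{e_0,x,y,xy,w,wx,wy,w(xy)\}$ (note $g$ fixes $e_0$ and acts only in the $[x,y]$-plane, rotating $x,y$ by $2t$ while fixing $xy,w,wx,wy,w(xy)$) and $b$ over $\mathcal{B}$, and use the multiplication table~(\ref{Bfano}) to compute both sides. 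The identities in Lemmas~\ref{properties} and \ref{associativity} are exactly what is needed to reduce products like $(\psi_{[x,y],2t}x)\cdot(f_7 b)$ to combinations of basis vectors. This is a finite but somewhat lengthy check; the reason it works is the compatibility between the $\cos t,\sin t$ appearing in $f_7$ on the various planes and the $\cos 2t,\sin 2t$ in $g$ — the ``doubling'' is forced by the product $x\cdot y$ (or $wx\cdot wy$, etc.) landing in a plane on which $f_7$ also rotates by $t$.

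Once the image is contained in $\Spin(7)$, the formula $(q\circ f_7)([x,y],e^{it}) = \psi_{[x,y],2t}$ follows immediately: the double covering $q\colon\Spin(7)\to SO(7)$ sends $\tilde g\mapsto g$, and we have just exhibited $g=\psi_{[x,y],2t}$ as the (unique up to sign) element of $SO(7)$ associated with $\tilde g = f_7([x,y],e^{it})$. For surjectivity onto $\Spin(7)$: the domain $\Gr{2}{7}\times S^1$ is compact and connected and $f_7$ is continuous, so its image is a compact connected subset of $\Spin(7)$; combined with the fact that $q\circ f_7$ has image all of $\{\psi_{[x,y],2t} : [x,y]\in\Gr{2}{7},\ t\in\R\}$, I would argue that this latter set is already all of $SO(7)$ — every element of $SO(7)$ is conjugate to, hence can be taken to be, a single rotation in some oriented $2$-plane, and $2t$ ranges over all angles — so $q\circ f_7$ is surjective, and then surjectivity of $f_7$ onto $\Spin(7)$ follows because $q$ is a $2$-to-$1$ covering and the image of $f_7$ is closed and contains, for each fibre of $q$, at least one point while being connected; more carefully, one checks that both sign choices $\pm\tilde g$ are hit, e.g.\ by noting $f_7([x,y],e^{i(t+\pi)})$ versus $f_7([x,y],e^{it})$ differ appropriately, or by a connectedness/degree argument. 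The main obstacle will be the bookkeeping in the basis-by-basis verification of $g(a)\tilde g(b)=\tilde g(ab)$: keeping the signs in Diagram~(\ref{Bfano}) straight across all the cases is where an error is most likely to creep in, and it is the only genuinely nontrivial part of the argument.
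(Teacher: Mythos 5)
Your main computation is the same as the paper's: you take $\tilde g = f_7([x,y],e^{it})$, guess $g=\psi_{[x,y],2t}\in SO(7)$, and verify the defining relation $g(\alpha)\tilde g(\beta)=\tilde g(\alpha\beta)$ on the orthogonal basis $\mathcal{B}$ using Diagram~(\ref{Bfano}); the paper streamlines the $64$ cases by first proving $\alpha\psi_{\pi/2}(\beta)=\psi_{\pi/2}(\alpha\beta)$ for $\alpha\notin\{x,y\}$ and then checking the remaining $16$ cases directly, but this is only a bookkeeping difference. Once the relation is verified, the identification $q\circ f_7 = \psi_{[x,y],2t}$ follows exactly as you say from the uniqueness (up to sign) of the lift. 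So the substantive part of your proposal is correct and essentially identical to the paper's argument.

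The surjectivity discussion at the end, however, contains a genuine error. It is false that every element of $SO(7)$ is a single rotation in a $2$-plane: a generic element of $SO(7)$ is conjugate to a block sum of \emph{three} independent plane rotations and fixes only a line, whereas $\psi_{[x,y],2t}$ fixes a $5$-dimensional subspace. Hence the image of $q\circ f_7$ is the proper subset of plane rotations in $SO(7)$, and $f_7$ cannot be onto $\Spin(7)$ --- indeed this is impossible for dimension reasons, since $\dim(\Gr{2}{7}\times S^1)=11<21=\dim\Spin(7)$. The proposition's phrase ``the image of $f_7$ is $\Spin(7)\subset SO(8)$'' is to be read as saying that $f_7$ lands in (factors through) the subgroup $\Spin(7)$ of $SO(8)$; that containment, together with the formula for $q\circ f_7$, is all the paper proves and all that is used in the sequel. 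You should drop the surjectivity argument entirely rather than try to repair it.
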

\begin{proof}
Let $g = \psi_{[x,y],2t}$ and $\psi_t = \psi_{[e_0,xy],[x,y],[wx,wy],[w,w(xy)]}$. It suffices to show that $g(\alpha)\psi_t(\beta) = \psi_t(\alpha\beta)$ for all $\alpha,\beta\in \mathcal{B}$, where $\mathcal{B}$ is the basis constructed in the proof of Lemma \ref{lem:f7_well_defined}. We first claim that $\alpha\psi_\frac{\pi}{2}(\beta) = \psi_\frac{\pi}{2}(\alpha\beta)$ for all $\alpha,\beta\in \mathcal{B}$ with $\alpha\not\in\{x,y\}$.
Observe that if $\beta \not\in \{ x,y \}$, then $\psi_\frac{\pi}{2}(\beta) = \beta(xy)$, and if $\beta\in \{x,y\}$, then $\psi_\frac{\pi}{2}(\beta) = -\beta(xy)$. Thus, we have
\begin{equation*}
    \alpha\psi_\frac{\pi}{2}(\beta) = \begin{cases}
        \;\;\;\alpha(\beta(xy)),\quad \beta\not\in\{ x,y \} \\
        -\alpha(\beta(xy)),\quad \beta\in\{ x,y \}.
    \end{cases}
\end{equation*}
Furthermore, Lemmas \ref{properties} and \ref{associativity} imply that
\begin{equation*}
    \alpha(\beta(xy)) = \begin{cases}
        \;\;\;(\alpha\beta)(xy),\quad \alpha=\beta,\; \alpha=xy,\; \beta=xy, \text{ or } \alpha\beta = \pm xy \\
        -(\alpha\beta)(xy),\quad \text{otherwise}.
    \end{cases}
\end{equation*}
Combined with the assumption that $\alpha \not\in \{ x,y \}$, we get that
\begin{equation*}
    \alpha\psi_\frac{\pi}{2}(\beta) = \left\{ \begin{matrix}
         \;\;\;(\alpha\beta)(xy),\quad \alpha\beta \not\in \{ \pm x, \pm y \} \\
        -(\alpha\beta)(xy),\quad \alpha\beta \in \{ \pm x, \pm y \}
    \end{matrix} \right\} = \psi_\frac{\pi}{2}(\alpha\beta).
\end{equation*}
This proves the claim. Then, for $\alpha \in \mathcal{B}\setminus\! \{x,y\}$, we have
\begin{align*}
    g(\alpha)\psi_t(\beta) &= \alpha(\cos{t}\beta + \sin{t}\psi_\frac{\pi}{2}(\beta)) = \cos{t}\alpha\beta + \sin{t}\alpha\psi_\frac{\pi}{2}(\beta) \\
    &= \cos{t}\alpha\beta + \sin{t}\psi_\frac{\pi}{2}(\alpha\beta) = \psi_t(\alpha\beta).
\end{align*}
It remains to show that $g(\alpha)\psi_t(\beta) = \psi_t(\alpha\beta)$ for $\alpha\in \{ x,y \}$. These 16 cases are easily checked by direct computation, for example
\begin{equation*}
    g(x)\psi_t(y) = (\cos{2t}x + \sin{2t}y)(-\sin{t}x + \cos{t}y) = 
    -\sin{t}e_0 + \cos{t}xy = \psi_t(xy).
\end{equation*}
This completes the proof.
\end{proof}

\begin{defn}
Let $f_5$ denote the restriction of $f_7$ to $\Gr{2}{5}\times S^1 \subseteq \Gr{2}{7}\times S^1$. We then define the map
    \begin{align*}
        f_7 \times f_5\colon \Gr{2}{7}\times S^1 \times \Gr{2}{5}\times S^1 &\longrightarrow \Spin(7) \\
        \left([x,y],e^{it},[x'y'],e^{it'}\right) &\longmapsto f_7\left([x,y],e^{it}\right) \cdot f_5\left([x',y'],e^{it'}\right).
    \end{align*}
\end{defn}

\begin{lemma}\label{lem:is_cobordism_class}
The map $f_7\times f_5$ admits a complex orientation.
In particular, $f_7\times f_5$ is a proper complex-oriented smooth map and represents an element in $MU^3(\Spin(7))$.
\end{lemma}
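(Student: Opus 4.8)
The plan is to produce a complex orientation of the map $f_7 \times f_5$ by exhibiting a factorization through a stably almost complex manifold together with a complex structure on the stable normal bundle, using the standard fact that proper smooth maps equipped with such data represent classes in $MU^*$. The source $\Gr{2}{7} \times S^1 \times \Gr{2}{5} \times S^1$ has dimension $10 + 1 + 6 + 1 = 18$, while $\Spin(7)$ has dimension $21$, so the map has codimension $3$; hence a complex orientation will exhibit $[f_7 \times f_5]$ as an element of $MU^3(\Spin(7))$, matching the claimed degree.

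First I would recall the precise notion of complex orientation that the paper uses (from \cite{quillen}, \cite{KQ}): a factorization $X \xrightarrow{e} \Spin(7) \times \R^N$, with $e$ a closed embedding, such that the normal bundle $\nu_e$ carries a complex vector bundle structure; equivalently, a stable complex structure on $\nu_e$ compatible with an orientation. So the key steps are: (1) observe that the oriented Grassmannians $\Gr{2}{n}$ are themselves complex manifolds — indeed $\Gr{2}{n}$ is diffeomorphic to the complex quadric $Q_{n-2} \subset \CP^{n-1}$, and in particular $\Gr{2}{7} \cong Q_5$ and $\Gr{2}{5} \cong Q_3$ are smooth complex projective varieties, hence carry canonical stable almost complex structures; (2) the circle $S^1$ bounds the disk $D^2$, so $S^1 \times S^1$ has a canonical stable framing and in particular a stable complex structure (as the boundary of $D^2 \times S^1$, or simply via $S^1 = \partial D^2$ with $D^2 \subset \C$); (3) therefore the source $\Gr{2}{7} \times S^1 \times \Gr{2}{5} \times S^1$ is a closed stably almost complex manifold; (4) the target $\Spin(7)$ is a compact Lie group, hence parallelizable, so its tangent bundle is trivial and any stable complex structure on the source's stable tangent bundle induces, via the exact sequence $0 \to T(\text{source}) \to (f_7 \times f_5)^* T\Spin(7) \oplus \nu \to \nu \to 0$ with $T\Spin(7)$ trivial, a stable complex structure on the normal bundle $\nu$ of an embedding of the source into $\Spin(7) \times \R^N$; (5) the map is automatically proper since the source is compact. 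Assembling these gives the complex orientation, and the "in particular" clause follows from Quillen's geometric description of $MU^*$.

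The main obstacle — really the only place where care is needed — is step (4): one must check that the stable complex structures combine coherently, i.e. that "stably complex tangent bundle of source" plus "trivial (hence stably complex) tangent bundle of target" yields a genuine stable complex structure on the normal bundle and not merely a stable structure up to the usual ambiguity. Concretely: choose a complex vector bundle $W$ over the source with $T(\text{source}) \oplus W \cong \underline{\C}^m$; since $T\Spin(7)$ is trivial real of rank $21$, pull it back and note $21$ is odd, so I would instead stabilize by adding a trivial complex line to make the relevant real bundle even-rank, whence $\underline{\R}^{21} \oplus \underline{\R} \cong \underline{\C}^{11}$; then the normal bundle $\nu$ of a suitable embedding satisfies $T(\text{source}) \oplus \nu \cong (f_7 \times f_5)^*T\Spin(7) \oplus \underline{\R}^N$, and adding $W$ to both sides and using the two complex trivializations shows $\nu \oplus (\text{complex bundle}) \cong (\text{complex bundle})$, so $\nu$ acquires a stable complex structure. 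The orientation induced on $\nu$ must be checked to be the one required by the convention of \cite{KQ}; this is a matter of fixing signs consistently and I would handle it by the same bookkeeping as in \cite[§3]{KQ}. Everything else is formal, and the degree count above confirms the output lands in $MU^3(\Spin(7))$.
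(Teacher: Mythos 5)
Your proposal is correct and follows essentially the same route as the paper, which simply cites the three facts you elaborate: $S^1$ is stably almost complex, the oriented Grassmannians $\Gr{2}{7}$ and $\Gr{2}{5}$ are almost complex, and $\Spin(7)$ is a compact Lie group (hence parallelizable), so the stable normal bundle of the proper map from the compact source acquires a stable complex structure. Your version just spells out the bookkeeping that the paper leaves implicit.
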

\begin{proof}
The fact that $g$ admits a complex orientation follows from the facts that $S^1$ is stably almost complex, $\Gr{2}{7}$ and $\Gr{2}{5}$ are almost complex, and $\Spin(7)$ is a compact Lie group.
\end{proof}


\section{Computations in homology}\label{sec:homology}

We recall from \cite[Corollary III.3.15 and Theorem IV.2.19]{mt} that, for $n=7,8$, there is an isomorphism $H^3(\Spin(n);\Z) \cong \Z$.
Let $\gamma_3 \in H^3(\Spin(n);\Z)$ be a generator.
By \cite[Proposition 3.4]{KQ}, $\gamma_3$ is not contained in the image of the Thom morphism. 
However, we will now show that the Thom morphism sends the cobordism class represented by $f_7 \times f_5$ to an integer multiple of $\gamma_3$ in $H^3(\Spin(7);\Z)$. 

\begin{theorem}\label{thm:spin}
    The Thom morphism $MU^3(\Spin(7)) \rightarrow H^3(\Spin(7);\Z)$ maps the element represented by $f_7\times f_5$ to $\pm 8$ times the generator $\gamma_3\in H^3(\Spin(7);\Z)$.
\end{theorem}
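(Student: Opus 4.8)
The plan is to reduce the statement to a Poincaré-duality computation on the closed oriented $21$-manifold $\Spin(7)$ and then to an explicit signed count. Recall Quillen's geometric description of complex cobordism: a proper complex-oriented smooth map $g\colon M\to X$ into a manifold $X$ represents a class in $MU^{\codim g}(X)$, and when $X$ is a closed oriented manifold the Thom morphism sends $[g]$ to $\mathrm{PD}_X(g_*[M])$, the Poincaré dual of the pushforward of the fundamental class. Now $\Spin(7)$ is a compact Lie group, hence a closed oriented $21$-manifold, and $W:=\Gr{2}{7}\times S^1\times\Gr{2}{5}\times S^1$ is a closed oriented $18$-manifold (orienting the oriented Grassmannians by their almost complex structures and the circles in the standard way, as used in Lemma \ref{lem:is_cobordism_class}). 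Hence
\[
\tau\big([f_7\times f_5]\big)=\mathrm{PD}_{\Spin(7)}\big((f_7\times f_5)_*[W]\big)\in H^3(\Spin(7);\Z)\cong\Z\,\gamma_3 ,
\]
and it suffices to determine the integer $k$ with $(f_7\times f_5)_*[W]=k\cdot e$ for a generator $e$ of $H_{18}(\Spin(7);\Z)\cong\Z$, and to show $k=\pm 8$.

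To detect $k$ I would pair against $H_3$. Since $\Spin(7)$ is $2$-connected, the Hurewicz isomorphism gives $H_3(\Spin(7);\Z)\cong\pi_3(\Spin(7))\cong\Z$; using $H_2(\Spin(7);\Z)=0$ and Poincaré duality, the intersection pairing $H_{18}(\Spin(7);\Z)\times H_3(\Spin(7);\Z)\to\Z$ is unimodular and $\langle\gamma_3,-\rangle$ is an isomorphism on $H_3$. Therefore $k=\pm\big((f_7\times f_5)_*[W]\cdot[\sigma]\big)$ for any generator $[\sigma]$ of $H_3(\Spin(7);\Z)$. I would fix an explicit $\sigma\colon S^3\to\Spin(7)$ representing such a generator — for instance the composite $S^3=Sp(1)\hookrightarrow Sp(2)=\Spin(5)\hookrightarrow\Spin(7)$, which works because each inclusion induces an isomorphism on $\pi_3$, the successive homogeneous spaces $\Spin(n)/\Spin(n-1)=S^{n-1}$ and $Sp(2)/Sp(1)=S^7$ being at least $4$-connected — and, after a small perturbation making $\sigma$ transverse to $f_7\times f_5$ (possible since $3+18=21$), compute
\[
k=\pm\sum_{(v,w)\,:\,\sigma(v)=(f_7\times f_5)(w)}\varepsilon_{(v,w)},
\]
the sum over the finitely many coincidences, with $\varepsilon_{(v,w)}=\pm 1$ the local intersection sign determined by the derivatives. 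This signed count is the "effect in homology via local degrees" announced in the introduction.

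Carrying out this count is the core of the proof and the place where the explicit structure is indispensable. By definition of $f_7\times f_5$, a coincidence amounts to $v\in S^3$ and $(a,b)$ with $\sigma(v)=f_7(a)\cdot f_5(b)$ in $\Spin(7)\subset SO(8)$; one analyzes this equation using the defining formula of Definition \ref{spinmap} — a product of four rotations about mutually determined oriented planes, all by a common angle $t$ — together with Proposition \ref{prop:spin_is_image}, which identifies $q\circ f_7$ (with $q$ the double cover onto $SO(7)$) as $([x,y],e^{it})\mapsto\psi_{[x,y],2t}$. The doubling of the rotation angle under $q$, combined with the octonionic identities of Lemmas \ref{properties} and \ref{associativity} that control the orthonormal basis $\mathcal B=\{e_0,x,y,xy,w,wx,wy,w(xy)\}$ appearing in Lemma \ref{lem:f7_well_defined}, is what forces the total to come out as $\pm 8$ rather than $\pm 1$: the coincidences organize according to the positions of the two oriented planes and the two circle parameters, and they contribute with the appropriate multiplicity and a consistent local sign. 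The same bookkeeping, with the factor $\Gr{2}{5}\times S^1$ replaced by its $\Spin(8)$-analogue, then yields Corollary \ref{cor:spin8}.

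The main obstacle is precisely this last step: making the transversality explicit and controlling the orientation signs. One must understand the image $(f_7\times f_5)(W)$ — or at least its intersection with $\sigma(S^3)$ — well enough to enumerate the coincidences, and then compute each local degree from the Jacobians of the rotations with respect to compatible orientations on $\Gr{2}{7}$, $\Gr{2}{5}$, the two circles, and $\Spin(7)$; this is where all of the actual content lies. A secondary point to be checked carefully is that the chosen $\sigma$ really generates $\pi_3(\Spin(7))$, so that no spurious factor enters $k$. Alternatively, one can bypass this by computing $\langle(f_7\times f_5)^*(\gamma_7\cup\gamma_{11}),[W]\rangle$ instead, where $\gamma_7\in H^7(\Spin(7);\Z)$ and $\gamma_{11}\in H^{11}(\Spin(7);\Z)$ are chosen so that $\gamma_3\gamma_7\gamma_{11}$ generates $H^{21}(\Spin(7);\Z)$; since the $\gamma_i$ are primitive in the Hopf algebra $H^*(\Spin(7);\Z)$, this reduces for dimensional reasons, via the Künneth theorem, to the product of two independent local-degree computations, one for $f_7$ and one for $f_5$, each of which one expects to evaluate by the same transversality method.
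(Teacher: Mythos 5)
Your reduction is sound as far as it goes: identifying $\tau([f_7\times f_5])$ with the Poincar\'e dual of $(f_7\times f_5)_*[W]$ and then trying to pin down the integer $k$ with $(f_7\times f_5)_*[W]=k\cdot e$ in $H_{18}(\Spin(7);\Z)\cong\Z$ is exactly the right first step, and detecting $k$ by pairing against a generator of $H_3$ is legitimate in principle. But the proof stops where it would have to begin. The entire content of the theorem is the value $k=\pm 8$, and you never compute it: you assert that the coincidences of $\sigma(S^3)$ with the image of $f_7\times f_5$ ``organize'' so that the signed count comes out to $\pm 8$, explicitly flag that enumerating them and controlling the local signs is ``where all of the actual content lies,'' and then do neither. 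Solving $\sigma(v)=f_7(a)\cdot f_5(b)$ inside $\Spin(7)\subset SO(8)$ for an explicit $\sigma$ is a genuinely hard enumeration (and any perturbation needed for transversality threatens to destroy the explicit octonionic structure you want to exploit), so this is not a routine omission; as written the argument establishes only that $k$ is some integer.

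The paper's proof avoids this count entirely by a factorization trick that you should compare with: by Proposition \ref{prop:spin_is_image}, composing $f_7\times f_5$ with the double cover $c\colon\Spin(7)\to SO(7)$ gives $h_{7,0}\circ p$, where $h_{7,0}$ is the analogous map into $SO(7)$ whose degree on $H_{18}$ was already computed to be $\pm 4$ in \cite[Theorem 4.7]{KQ}, and $p$ doubles the two circle coordinates, hence has degree $4$. Since $c_*$ is multiplication by $2$ on $H_{18}$ by \cite[7.4]{pittie}, the commutative square forces $2k=\pm 16$, i.e.\ $k=\pm 8$. If you want to salvage your intersection-theoretic route you must actually carry out the coincidence count with signs, or else import the degree of $h_{7,0}$ and the covering argument as the paper does. (Your fallback of evaluating $(f_7\times f_5)^*(\gamma_7\cup\gamma_{11})$ on $[W]$ also needs the unverified integral statement that $\gamma_3\gamma_7\gamma_{11}$ generates $H^{21}(\Spin(7);\Z)$, which is delicate because $H^*(\Spin(7);\Z)$ has $2$-torsion.)
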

\begin{proof}
\begin{sloppypar}
By Poincar\'e duality it suffices to show that the fundamental class of $\Gr{2}{7}\times S^1 \times \Gr{2}{5}\times S^1$ in homology is mapped to eight times the corresponding generator in $H_{18}(\Spin(7);\Z)$.
In \cite{KQ} we constructed a class of maps into special orthogonal groups. 
In particular, there is the map\\[-22pt]
\end{sloppypar}
\begin{align*}
    H_{7,0}\colon \Gr{2}{7}\times S^1 \times \Gr{2}{5}\times S^1 &\longrightarrow SO(7) \\
    \left([x,y],e^{it},[x',y'],e^{it'}\right) &\longmapsto \psi_{[x,y],t} \cdot \psi_{[x',y'],t'.}
\end{align*}
By \cite[Theorem 4.7]{KQ}, the induced map 
\begin{align*}
(h_{7,0})_\ast\colon H_{18}(\Gr{2}{7}\times S^1 \times \Gr{2}{5}\times S^1) \longrightarrow H_{18}(SO(7))
\end{align*}
is given by a multiplication by $\pm 4$ after identifying each homology group with $\Z$. 
We define the map
\begin{align*}
    p\colon \Gr{2}{7} \times S^1 \times \Gr{2}{5} \times S^1 &\longrightarrow\Gr{2}{7} \times S^1 \times \Gr{2}{5} \times S^1 \\
    \left( [x,y],e^{it},[x',y'],e^{it'} \right) &\longmapsto \left( [x,y],e^{2it},[x',y'],e^{2it'} \right).
\end{align*}
By Proposition \ref{prop:spin_is_image}, we get that $c \circ (f_7\times f_5) = h_{7,0}\circ p$. Thus, we have the commutative diagram
\begin{equation*}
\begin{tikzcd}[column sep=huge]
    H_{18}(\Gr{2}{7}\times S^1 \times \Gr{2}{5}\times S^1) \arrow[r, "(f_7\times f_5)_\ast"] \arrow[d, "p_\ast"'] & H_{18}(\Spin(7)) \arrow[d, "c_\ast"] \\
    H_{18}(\Gr{2}{7}\times S^1 \times \Gr{2}{5}\times S^1) \arrow[r, "(h_{7,0})_\ast"] & H_{18}(SO(7)),
\end{tikzcd}    
\end{equation*}
where all the homology groups are isomorphic to $\Z$. 
Since $p$ consists of two maps of degree $2$ on the circles, we see that $p_\ast$ is a multiplication by $4$. 
Furthermore, it follows from \cite[7.4]{pittie} that $c_\ast$ is a multiplication by $2$. Thus, we conclude that $(f_7\times f_5)_\ast$ is given by a multiplication by $\pm 8$, which completes the proof.
\end{proof}

Since $\Spin(8)$ is homeomorphic to $\Spin(7)\times S^7$ (see \cite{imn}), Theorem \ref{thm:spin} implies the following result.

\begin{cor}\label{cor:spin8}
The Thom morphism $MU^3(\Spin(8))\rightarrow H^3(\Spin(8);\Z)$ maps the element represented by
\begin{align*}
        \Gr{2}{7}\times S^1 \times \Gr{2}{5}\times S^1 \times S^7 &\longrightarrow \Spin(7)\times S^7 \cong \Spin(8) \\
        \left([x,y],e^{it},[x',y'],e^{it'}\!,s\right) &\longmapsto \left(f_7\left([x,y],e^{it}\right) \cdot f_5\left([x',y'],e^{it'}\right),s\right)
\end{align*}
to $\pm 8$ times the generator $\gamma_3\in H^3(\Spin(8);\Z)$.
\end{cor}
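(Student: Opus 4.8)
The plan is to deduce this from Theorem \ref{thm:spin} by exploiting the product decomposition $\Spin(8)\cong\Spin(7)\times S^7$, following the same Poincar\'e duality strategy used there. Write $W' = \Gr{2}{7}\times S^1\times\Gr{2}{5}\times S^1$, so $\dim W' = 10+1+6+1 = 18$, set $W = W'\times S^7$, and let $\varphi_8 = (f_7\times f_5)\times\id_{S^7}\colon W\to\Spin(7)\times S^7\cong\Spin(8)$ be the map in the statement. First I would record that $\varphi_8$ represents a class in $MU^3(\Spin(8))$: it is proper and smooth, and the stable normal bundle of $\varphi_8$ agrees stably with the pullback of the stable normal bundle of $f_7\times f_5$ (the $S^7$-factors contributing a stably trivial virtual summand), hence carries a complex structure by Lemma \ref{lem:is_cobordism_class}; the relevant codimension is $\dim\Spin(8)-\dim W = 28-25 = 3$. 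Since $\Spin(8)$ is a closed oriented manifold and the Thom morphism sends a proper complex-oriented map to the Poincar\'e dual of the pushforward of the fundamental class, it suffices to show that $(\varphi_8)_\ast\colon H_{25}(W;\Z)\to H_{25}(\Spin(8);\Z)$ is multiplication by $\pm 8$ once both groups are identified with $\Z$.

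The next step is a Künneth computation that pins down the generators on both sides. Recall from the proof of Theorem \ref{thm:spin} that $H_{18}(W';\Z)\cong\Z$ is generated by the fundamental class $[W']$, and that $(f_7\times f_5)_\ast[W'] = \pm 8\,\nu$, where $\nu$ generates $H_{18}(\Spin(7);\Z)\cong\Z$. Since $H_\ast(S^7;\Z)$ is free and concentrated in degrees $0$ and $7$, the Künneth theorem gives $H_{25}(W;\Z)\cong H_{18}(W';\Z)\otimes H_7(S^7;\Z)\cong\Z$, generated by $[W']\times[S^7]$ (which is the fundamental class $[W]$ with the product orientation); and likewise $H_{25}(\Spin(7)\times S^7;\Z)\cong H_{18}(\Spin(7);\Z)\otimes H_7(S^7;\Z)\cong\Z$ — the other potential summand $H_{25}(\Spin(7);\Z)\otimes H_0(S^7;\Z)$ vanishes because $\dim\Spin(7) = 21 < 25$ — generated by $\nu\times[S^7]$. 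This also reconfirms $H_{25}(\Spin(8);\Z)\cong\Z$, Poincar\'e dual to $H^3(\Spin(8);\Z)\cong\Z$ from \cite[Corollary III.3.15 and Theorem IV.2.19]{mt}.

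Finally I would assemble the pieces using naturality of the homological cross product. Since $\varphi_8 = (f_7\times f_5)\times\id_{S^7}$,
\[
(\varphi_8)_\ast\bigl([W']\times[S^7]\bigr) = (f_7\times f_5)_\ast[W']\times(\id_{S^7})_\ast[S^7] = \pm 8\,\bigl(\nu\times[S^7]\bigr),
\]
so $(\varphi_8)_\ast$ is multiplication by $\pm 8$ on $H_{25}\cong\Z$, and passing back through Poincar\'e duality yields $\tau([\varphi_8]) = \pm 8\,\gamma_3$ in $H^3(\Spin(8);\Z)$. The only part requiring genuine care — and the content of the Künneth step above — is the bookkeeping that the ``corresponding generator'' $\gamma_3$ of $H^3(\Spin(8);\Z)$ is the one pulled back from $H^3(\Spin(7);\Z)$ along the projection $\Spin(7)\times S^7\to\Spin(7)$, equivalently that its homological dual is $\nu\times[S^7]$; once this identification is made, the rest is formal naturality of the cross product and of the Thom morphism, which is why the statement is phrased as a corollary rather than a theorem.
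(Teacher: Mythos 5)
Your proposal is correct and follows exactly the route the paper intends: the paper offers no written proof beyond noting the homeomorphism $\Spin(8)\cong\Spin(7)\times S^7$ and invoking Theorem \ref{thm:spin}, and your K\"unneth and cross-product bookkeeping (together with the observation that $H_{25}(\Spin(7);\Z)=0$ for dimension reasons, so the generator of $H_{25}(\Spin(8);\Z)$ is $\nu\times[S^7]$) is precisely the omitted detail. Nothing to correct.
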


\begin{remark}
    By studying the differentials in the Atiyah--Hirzebruch spectral sequence for $\Spin(7)$, one can see that $2\cdot\gamma_3\in H^3(\Spin(7);\Z)$ is in the image of the Thom morphism. Thus, the construction of $f_7 \times f_5$ is not the best possible one. Furthermore, using the same method as in the proof of Theorem \ref{thm:spin}, we see that the element of $MU^{10}(\Spin(7))$ represented by $f_7$ is mapped to $\pm 4$ times the generator of $H^{10}(\Spin(7);\Z)$. However, there must exist some element of $MU^{10}(\Spin(7))$ which maps to $2$ times this generator.
\end{remark}


%
\bibliographystyle{amsalpha}

\end{document}